\documentclass[letterpaper,11pt]{amsart}
\usepackage[margin=1in]{geometry}
\usepackage{hyperref}
\usepackage{amsfonts,amssymb}
\usepackage{pb-diagram}
\usepackage{enumerate}
\makeatletter
\let\@@enum@org\@@enum@
\def\@@enum@[#1]{\@@enum@org[\normalfont #1]}
\makeatother

\newcommand\form[1]{\langle #1\rangle}
\newcommand\supp{\operatorname{supp}}

\title{An obstruction to embedding right-angled Artin groups in mapping class groups}

\newtheorem{thm}{Theorem}
\newtheorem{lemma}[thm]{Lemma}

\newtheorem{quest}[thm]{Question}

\numberwithin{thm}{section}

\newcommand{\gam}{\Gamma}

\DeclareMathOperator{\Mod}{Mod}

\bibliographystyle{plain} 
\author{Sang-hyun Kim}
\address{Department of Mathematical Sciences, KAIST, 335 Gwahangno, Yuseong-gu, Daejeon 305-701, Republic of Korea}
\email{shkim@kaist.edu}
\thanks{The first named author is supported by the Basic Science Research Program (2011-0026138) and the Mid-Career Researcher Program (2011-0027600) through the National Research Foundation funded by the Ministry of Education, Science and Technology of Korea.}

\author{Thomas Koberda}
\address{Department of Mathematics\\ Yale University\\ P.O. Box 208283\\ New Haven, CT 06520-8283}
\email{thomas.koberda@gmail.com}
\keywords{}

\begin{document}
\begin{abstract}
For every orientable surface of finite negative Euler characteristic, we find a right-angled Artin group of cohomological dimension two which does not embed into the associated mapping class group.  For a right-angled Artin group on a graph $\gam$ to embed into the mapping class group of a surface $S$, we show that the chromatic number of $\gam$ cannot exceed the chromatic number of the clique graph of the curve graph $\mathcal{C}(S)$.  Thus, the chromatic number of $\gam$ is a global obstruction to embedding the right-angled Artin group $A(\gam)$ into the mapping class group $\Mod(S)$.
\end{abstract}
\maketitle
\begin{center}
\today
\end{center}
\section{Introduction}
Let $S$ be an orientable surface with finite negative Euler characteristic, and let $\Mod(S)$ denote the mapping class group of $S$.  Let $\gam$ be a finite simplicial graph. We denote the vertex set and the edge set of $\gam$ by $V(\gam)$ and $E(\gam)$, respectively.  The \emph{right-angled Artin group} $A(\gam)$ is defined by its presentation \[A(\gam)=\langle V(\gam)\mid [u,v]=1\,\, {\rm whenever } \,\, \{u,v\}\in E(\gam)\rangle.\]

It is known by the work of Crisp and Farb (cf. \cite{CF}, also \cite{CP}, \cite{CW}, \cite{clm2010} and \cite{kobraag}) that each right-angled Artin group embeds into some mapping class group.  In this article, we are interested in obstructions to embedding a given right-angled Artin group into a given mapping class group.  In particular, we are interested in the following question:
\begin{quest}
Given a right-angled Artin group $A(\gam)$, what is the simplest surface $S$ for which there is an embedding \[A(\gam)\to\Mod(S)?\]
\end{quest}
Here, by ``simplest surface", we mean one with the smallest absolute value of the Euler characteristic.
Given a right-angled Artin group $A(\gam)$, there are many restrictions on the simplest surface $S$ for which $A(\gam)$ embeds in $\Mod(S)$.  Possibly the simplest restriction comes from 
the maximal rank of an abelian subgroup.  
Indeed, Birman, Lubotzky and McCarthy (see \cite{BLM}) showed that the maximal rank of an abelian subgroup of $\Mod(S)$ is equal to the maximum number of the components in a multicurve on $S$ and is thus roughly proportional to the Euler characteristic of $S$.  
A \emph{clique} of a simplicial graph is a set of pairwise adjacent vertices.  By convention, a singleton vertex is a clique.
The maximal rank of an abelian subgroup of a right-angled Artin group $A(\gam)$ is equal to the largest size of a clique of $\gam$, which is also equal to the cohomological dimension of $A(\gam)$.
Thus, if $A(\gam)$ embeds in $\Mod(S)$ then the cohomological dimension of $A(\gam)$ cannot exceed the maximal rank of an abelian subgroup of $\Mod(S)$.
If $P=F_1\times\cdots\times F_k$ is a product of finitely generated, nonabelian free groups, one can formulate similar restrictions on a surface $S$ for which $P$ embeds into $\Mod(S)$ as for abelian subgroups.  A discussion of such restrictions can be found in \cite{kobraag}.

The restrictions on $S$ which arise from abelian subgroups and products of free groups are of the same general flavor, coming from the fact that to accommodate more direct factors, one needs more ``disjoint subsurfaces".  The purpose of this note is to give new restrictions on $S$ which can be attributed more to the global structure of the graph $\gam$ rather than to local structure (such as the largest size of a clique in $\gam$).

Our obstruction comes from comparing the \emph{chromatic numbers} of $\gam$ and of the \emph{curve graph} of $S$.  Recall that the chromatic number of a graph $\gam$ is the smallest number of colors needed to color the vertices of $\gam$ in such a way that no two adjacent vertices have the same color.  The curve graph of a surface $S$ is a graph whose vertices are isotopy classes of essential, nonperipheral, simple closed curves in $S$.  Two vertices are connected by an edge if the two isotopy classes of curves can be disjointly realized.  The \emph{curve complex} of $S$ is the associated flag complex of the curve graph.  Our main result is the following:

\begin{thm}\label{t:embed}
Let $S$ be an orientable surface of finite negative Euler characteristic, and let $M$ be a positive integer.  Then there is a finite graph $\gam_{S,M}$ of girth at least $M$ such that $A(\gam_{S,M})$ does not embed as a subgroup of $\Mod(S)$.
\end{thm}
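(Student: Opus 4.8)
The plan is to reduce the theorem to the classical theorem of Erd\H{o}s that for every pair of positive integers $M$ and $N$ there is a finite graph whose girth exceeds $M$ and whose chromatic number exceeds $N$. Granting this, it is enough to produce, for each surface $S$ as in the statement, a finite constant $N(S)$ such that every finite graph $\gam$ with $A(\gam)$ embedding in $\Mod(S)$ satisfies $\chi(\gam)\le N(S)$; one then takes $\gam_{S,M}$ to be a graph of girth at least $M$ and chromatic number exceeding $N(S)$, as furnished by Erd\H{o}s's theorem. (Chromatic number does not increase on passing to subgraphs, and $A(\gam)$ is finitely generated when $\gam$ is finite, so there is no loss in restricting to finite graphs.) I would take $N(S)$ to be the chromatic number of the clique graph $\mathcal{K}=\mathcal{K}(\mathcal{C}(S))$ of the curve graph of $S$, so that the theorem splits into an \emph{obstruction} --- an embedding $A(\gam)\hookrightarrow\Mod(S)$ forces $\chi(\gam)\le\chi(\mathcal{K})$ --- and a \emph{finiteness} statement --- $\chi(\mathcal{K})<\infty$.

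For the obstruction I would construct, from an embedding $\rho\colon A(\gam)\hookrightarrow\Mod(S)$, a graph homomorphism $\gam\to\mathcal{K}$; since a graph homomorphism $G\to H$ implies $\chi(G)\le\chi(H)$, this gives the inequality above. Every generator $v$ has infinite order, so $\rho(v)$ is not periodic; if $\rho(v)$ is pseudo-Anosov on all of $S$ then its centralizer is virtually cyclic, and by injectivity of $\rho$ the vertex $v$ can then have no neighbor in $\gam$ --- even a single neighbor would produce a copy of $\bZ^2$ inside a virtually cyclic group --- so such a $v$ is isolated and costs only one color. Otherwise $\rho(v)$ has nonempty canonical reduction system, which I attach to $v$ as a clique of $\mathcal{C}(S)$. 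The essential geometric input is that if $\{u,v\}\in E(\gam)$ then $\rho(u)$ and $\rho(v)$ commute, and the canonical reduction systems of commuting mapping classes have union a multicurve, with the induced classes on the common complementary pieces again commuting; one defines $\mathcal{K}$ precisely so that this configuration is an edge. Some care is needed to make the assignment injective on edges --- a reduction multicurve alone does not determine the mapping class, so one must record slightly more of the Nielsen--Thurston canonical form to separate distinct commuting generators --- but this is bookkeeping rather than a new idea.

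The finiteness $\chi(\mathcal{K}(\mathcal{C}(S)))<\infty$ I expect to be the real obstacle, and it is where the global character of the invariant lives: it is a statement purely about multicurves on $S$ and cannot follow merely from the bound $3g-3+n$ on the number of components of a multicurve. A natural line of attack is induction on the complexity $\xi(S)=3g-3+n$, exploiting that there are only finitely many topological types of multicurves on $S$: to color a clique of $\mathcal{C}(S)$ one cuts $S$ along the sub-multicurve formed by its curves of smallest topological type, records that type, and colors the remaining multicurve --- which lives on a surface of strictly smaller complexity --- with the finitely many colors supplied by induction; the substance is to arrange the definition of $\mathcal{K}$ so that this is simultaneously a proper coloring of $\mathcal{K}$ and compatible with the adjacency relation used in the obstruction. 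Once the obstruction and the finiteness are in place, Erd\H{o}s's theorem applied with $N=N(S)=\chi(\mathcal{K}(\mathcal{C}(S)))$ yields the graphs $\gam_{S,M}$ and completes the proof.
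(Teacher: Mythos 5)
Your skeleton is exactly the paper's (Erd\H{o}s plus a bound of $\chi(\gam)$ by the chromatic number of the clique graph of the curve graph), but the step you dismiss as bookkeeping is a genuine gap, and it is where the paper's real work lives. Assigning to a vertex $v$ the canonical reduction system of $\rho(v)$ does not give a graph homomorphism $\gam\to\mathcal{C}(S)_k$: adjacent vertices can have identical reduction systems. Concretely, for disjoint non-isotopic curves $\alpha,\beta$ the assignment $u\mapsto T_\alpha T_\beta$, $v\mapsto T_\alpha T_\beta^2$ is a faithful representation of $\bZ^2=A(\mathrm{edge})$, and both images have canonical reduction system $\{\alpha,\beta\}$, so composing with a coloring of $\mathcal{C}(S)_k$ would give adjacent vertices the same color. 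Recording ``slightly more of the Nielsen--Thurston form'' does not repair this cheaply: the distinguishing data (twisting coefficients, pseudo-Anosov foliations) ranges over an infinite set, so it changes the target graph and your finiteness statement no longer applies to it, while any finite truncation of that data leaves the same problem (the two multitwists above agree on reduction system and on which pieces are ``active''). The paper resolves precisely this point with two ingredients absent from your plan: after passing to powers it invokes Koberda's theorem to identify the group generated by the pure components with a right-angled Artin group $A(X)$, where $X$ is realized as an induced subgraph of $\mathcal{C}(S)$, and then, inside $A(X)$, it modifies the embedding $\psi$ (choosing one of minimal total support and applying transvections $v\mapsto v(v')^{-1}$) until distinct vertices of $\gam$ have distinct clique supports. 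That argument uses RAAG-specific structure --- commutation detected by supports spanning cliques, well-defined supports, transvection automorphisms --- which is not available if you work directly with Nielsen--Thurston data in $\Mod(S)$.

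You also misplace the difficulty in the finiteness step. Granting that $\mathcal{C}(S)$ itself has finite chromatic number (the elementary Bestvina--Bromberg--Fujiwara lemma the paper quotes), the clique graph statement is a three-line argument rather than ``the real obstacle'': if $f$ colors $\mathcal{C}(S)$ by a finite set $F$, color a clique $K$ by the subset $f(K)\in 2^F$; when $K\cup L$ is a clique with $K\ne L$, any $v\in K\setminus L$ is adjacent to every vertex of $L$, so $f(v)\notin f(L)$ and hence $f(K)\ne f(L)$. This bounds $\chi(\mathcal{C}(S)_k)$ by $2^{\chi(\mathcal{C}(S))}$. Your proposed induction on complexity is only a sketch (it does not handle nested cliques $K\subsetneq L$, which are adjacent in the clique graph, nor distinct cliques whose ``smallest topological type'' parts agree after cutting), and it is unnecessary once the curve-graph fact is in hand --- though you would still need that fact, which your proposal neither proves nor cites. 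In short: right global strategy, but the obstruction step needs Koberda's theorem plus the minimal-support/transvection argument (not bookkeeping), and the finiteness step is the easy part.
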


Recall that the \emph{girth} of a graph is the shortest cycle length.  Thus, a graph of girth at least four has no triangles and the associated right-angled Artin group has cohomological dimension two.

\section{Acknowledgements}
The authors wish to thank B. Farb and C. Leininger for helpful conversations concerning the chromatic number of the curve graph. The authors wish to thank the Park City Mathematical Institute summer session in geometric group theory for hospitality while this research was completed. 
The authors are grateful for the suggested corrections from an anonymous referee.

\section{The proof of Theorem \ref{t:embed}}
In order to prove Theorem \ref{t:embed}, we will need the following result, whose proof is elementary:

\begin{thm}[Bestvina--Bromberg--Fujiwara, \cite{bestbromfuji}, Lemma 4.6]\label{p:chromatic}
Let $S$ be an orientable surface of finite negative Euler characteristic.  Then the curve graph has a finite chromatic number.
\end{thm}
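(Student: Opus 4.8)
The plan is to prove the logically equivalent statement that the isotopy classes of essential simple closed curves on $S$ can be partitioned into finitely many families, each consisting of pairwise intersecting curves; such a family is precisely one color class, because a proper coloring of the curve graph is an assignment of colors under which two disjoint non-isotopic curves always receive different colors. It should be stressed at the outset that neither the finiteness of the dimension of the curve complex nor the fact that a multicurve in $S$ has only boundedly many components suffices to conclude on its own, since there exist triangle-free graphs of arbitrarily large chromatic number; one must instead package genuinely topological information about how curves sit inside $S$ into a finite-valued invariant that already distinguishes disjoint curves.

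I would construct such an invariant from a single finite characteristic cover. Fix an odd prime $p$ and let $\hat S \to S$ be the finite regular cover corresponding to the kernel of $\pi_1(S) \to H_1(S;\bZ/p)$; to a curve $\al$ associate the homological type of its total preimage $\hat\al \subset \hat S$ — for instance, the unordered list, with multiplicities, of the classes of the components of $\hat\al$ in $H_1(\hat S;\bZ/q)$ for a second prime $q$. This invariant takes finitely many values, and since disjoint curves in $S$ have disjoint preimages in $\hat S$, two disjoint non-isotopic curves with equal invariant would have preimages that are homologically indistinguishable in $\hat S$; this is a much stronger condition than being homologous in $S$, and in particular it is not automatic even for two separating curves, because a lift of a separating curve need not separate $\hat S$. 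If a single characteristic cover does not separate every pair, one passes to a tower $S = S_0 \leftarrow S_1 \leftarrow \cdots$ of finite characteristic covers and records the homological type of the total preimage in $S_n$. Since the change-of-coordinates principle yields only finitely many topological types of pairs of disjoint curves in $S$, and since a characteristic cover that separates one pair of a given type separates every pair of that type (the lift of a mapping class carries one configuration to the other and permutes the finite set of invariant values bijectively), it suffices that each of these finitely many types be separated at some finite stage; one truncates the tower at a level $n$ that works for all of them and uses the resulting invariant as the coloring. If desired, the finitely many curves that are short with respect to an auxiliary hyperbolic metric — they are pairwise disjoint, so there are only finitely many — can be peeled off first and given their own colors by the collar lemma, which guarantees that disjoint simple closed geodesics have disjoint embedded collars; but this refinement is not essential.

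The main obstacle is the claim that a \emph{finite} tower separates every topological type of disjoint pair: that for disjoint non-isotopic curves $\al\ne\beta$ there is a finite characteristic cover in which the total preimages of $\al$ and of $\beta$ have distinct homological type. This is a subgroup-separability assertion about $\pi_1(S)$, reducing to the production, for each configuration, of a finite quotient of $\pi_1(S)$ in which the cyclic subgroups carried by the two curves are distinguished; it is here that residual finiteness of $\pi_1(S)$ — or, in a sharper form, the LERF property of surface groups, which ensures that every simple closed curve becomes embedded and homologically essential in some finite cover — enters. The delicate point is to carry out this separation uniformly over the finitely many configuration types, so that one cover, and hence one finite palette of colors, serves for all of them; but everything involved is elementary covering-space theory together with residual finiteness of surface groups, consistent with the assertion that the proof is elementary.
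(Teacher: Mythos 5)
The paper never proves this statement---it is imported verbatim from Bestvina--Bromberg--Fujiwara and used as a black box---so your proposal has to stand on its own as a proof of their lemma, and it does not yet do so. Your reductions are sound: by change of coordinates there are only finitely many topological types of pairs of disjoint non-isotopic curves, and since every homeomorphism of $S$ lifts to a finite characteristic cover, an invariant built from such a cover that separates one pair of a given type separates every pair of that type. But after these reductions the entire content of the theorem sits in the claim you yourself flag as ``the main obstacle'': that for each type there is a finite characteristic cover in which the total preimages of the two curves have distinct homological type. You offer no mechanism for this, and the tools you name do not supply it. Residual finiteness distinguishes elements from the identity, and LERF separates a finitely generated subgroup from an element outside it (e.g.\ it makes a given curve embedded, or nonseparating, in some finite cover); neither says anything about comparing the multisets of $H_1(\,\cdot\,;\bZ/q)$-classes of preimage components of two \emph{different} curves in a common characteristic cover. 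Worse, two disjoint nonseparating curves are related by a homeomorphism of $S$ which lifts to every characteristic cover, so in every such cover their invariants lie in a single orbit of the lifted action on homology; no soft separability principle can force the two values apart, and the topology of the disjoint configuration must enter---yet disjointness is used nowhere in your sketch except to bound the number of types. Converting ``the cyclic subgroups are distinguished in a finite quotient'' into ``the preimages have different homological type'' is also a nontrivial step you elide: these are different invariants. The concrete verification that disjoint curves receive different colors is exactly what the cited BBF lemma supplies and what is missing here.

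Two further remarks. First, if you record only the invariant at the top level $S_n$ of your tower, separation achieved at an earlier level need not persist, since the homological type in a deeper cover does not obviously refine the one in a shallower cover; you should record the whole tuple of invariants up to level $n$, or use an invariant monotone along the tower. Second, there is an off-the-shelf input that does make your scheme work, but it is not LERF: conjugacy separability of surface groups. Color a curve by the conjugacy class, up to inversion, of its image in a fixed finite characteristic quotient chosen (one representative pair per disjoint type, then a common characteristic refinement) so that the two images in each representative pair are non-conjugate; non-conjugacy persists in deeper quotients and is transported by the induced automorphisms exactly as in your uniformity argument. That yields a correct proof, but conjugacy separability of closed surface groups is a genuinely stronger theorem than residual finiteness and is not ``elementary covering-space theory,'' whereas the argument of Bestvina--Bromberg--Fujiwara avoids any such input.
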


Throughout, write $\gam$ for a finite simplicial graph and let $\Mod(S)$ denote the mapping class group of a surface $S$ as above.  

Let $Y$ be a (possibly infinite) simplicial graph.  
We will write $Y_k$ for the \emph{clique graph} of $Y$, which is defined as follows.
The vertex set of $Y_k$ consists of a vertex $v_K$ for every clique $K$ of $Y$. Two vertices $v_K$ and $v_L$ are adjacent in $Y_k$ if and only if $K\cup L$ is also a clique in $Y$.

For a set $F$, a map $f: Y^{(0)}\to F$ is called a \emph{coloring} of $Y$ by $F$ if whenever $u$ and $v$ are adjacent vertices, we have that $f(u)\ne f(v)$.  Note that the minimal $|F|$ for which there exists a coloring $f:Y^{(0)}\to F$ is the chromatic number of $Y$.

\begin{lemma}\label{l:cliquecolor}
If $Y$ is a simplicial graph with a finite chromatic number then $Y_k$ has a finite chromatic number.
\end{lemma}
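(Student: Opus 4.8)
The plan is to push a proper coloring of $Y$ forward to a coloring of $Y_k$ by \emph{subsets} of the color set. Suppose $Y$ has chromatic number $n$, and fix a coloring $c\colon Y^{(0)}\to\{1,\dots,n\}$. The first thing I would record is elementary but crucial: $c$ restricts to an \emph{injective} map on every clique $K$ of $Y$, since any two vertices of $K$ are adjacent and hence receive distinct colors. In particular $c(K)\subseteq\{1,\dots,n\}$ has exactly $|K|$ elements.

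Next I would define $\hat c\colon (Y_k)^{(0)}\to 2^{\{1,\dots,n\}}$ by $\hat c(v_K)=c(K)$. Since there are only $2^n$ subsets of $\{1,\dots,n\}$, it is enough to verify that $\hat c$ is a coloring of $Y_k$, i.e. that $\hat c(v_K)\ne\hat c(v_L)$ whenever $v_K$ and $v_L$ are adjacent in $Y_k$. By definition of the clique graph, adjacency of $v_K$ and $v_L$ means that $K\cup L$ is a clique of $Y$; note also that $K\ne L$, since $Y_k$ carries a single vertex for each clique and simplicial graphs have no loops.

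The main step is to deduce $c(K)\ne c(L)$ from injectivity of $c$ on $K\cup L$. Arguing by contradiction, suppose $c(K)=c(L)$ and choose $x\in K\setminus L$. Since $c(x)\in c(K)=c(L)$, there is $y\in L$ with $c(y)=c(x)$; but $x$ and $y$ both lie in the clique $K\cup L$, on which $c$ is injective, so $x=y$, contradicting $x\notin L$. Hence $K\subseteq L$, and by symmetry $L\subseteq K$, so $K=L$ --- a contradiction. Therefore $\hat c$ is a coloring of $Y_k$, and $\chi(Y_k)\le 2^n<\infty$.

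There is no analytic or geometric content here, so I do not expect a genuine obstacle; the only point requiring any care is the last step, namely that two \emph{distinct} adjacent cliques cannot have equal color-sets, and this is precisely where one must use that $c$ is injective on the union $K\cup L$ rather than merely on $K$ and on $L$ separately.
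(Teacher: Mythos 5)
Your proposal is correct and follows essentially the same route as the paper: both color $Y_k$ by the power set of the color set via $v_K\mapsto c(K)$, and both verify properness by using that the coloring is proper on the clique $K\cup L$ (the paper phrases this as a vertex $v\in K\setminus L$ being adjacent to all of $L$, so $c(v)\notin c(L)$; your injectivity-on-$K\cup L$ argument is the same observation). No gaps; the bound $\chi(Y_k)\le 2^n$ matches the remark following the paper's proof.
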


\begin{proof}
Let $f:Y^{(0)}\to F$ be a coloring of $Y$ by a finite set $F$.
We define $g:Y_k^{(0)}\to 2^F$ by $g(v_K)=f(K)$ for each clique $K$ of $Y$.
We claim that $g$ is a coloring of $Y_k$ by the finite set $2^F$.
Suppose $K$ and $L$ are distinct cliques in $Y$ such that $v_K$ and $v_L$ are adjacent.
There exists $v\in (K\setminus L)\cup (L\setminus K)$. We may assume $v\in K\setminus L$.
Since $K\cup L$ is a clique, the vertex $v$ is adjacent to each vertex in $L$.
In particular, $f(v)\not\in f(L)$. We have  $f(v)\in g(v_K)=f(K)\ne f(L)=g(v_L)$.
\end{proof}

We see from the above proof that if $Y$ is a simplicial graph with chromatic number $M$ then $Y_k$ has chromatic number at most $2^M$.  
We say a graph $X$ is an \emph{induced subgraph} of $Y$ if $X$ is a subgraph of $Y$ and every edge of $Y$ joining two vertices in $X$ is contained in $X$.
For an element $g$ of $A(\gam)$, the \emph{support} of $g$ is $\supp(g) = \{v\in V(\gam)\mid v \text{ appears in a reduced word representing }g\}$.  It is true, though not completely obvious, that the support of an element $g\in A(\gam)$ is well--defined.

\begin{lemma}\label{l:embed}
Suppose that $A(\gam)$ embeds as a subgroup of $\Mod(S)$.  Then $\gam$ embeds as an induced subgraph of $\mathcal{C}(S)_k$.
\end{lemma}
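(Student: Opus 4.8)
The plan is to attach to every vertex $v$ a multicurve $M_v$ on $S$, read off from $\phi(v)$, so that $M_v\cup M_w$ is again a multicurve exactly when $\{v,w\}\in E(\gam)$ and so that $v\mapsto M_v$ is injective; this is precisely an induced embedding $\gam\hookrightarrow\mathcal C(S)_k$. First I would replace each $\phi(v)$ by a pure power: by Ivanov's theorem there is $N\geq 1$ with $t_v:=\phi(v)^N$ pure for all $v$, and since $\phi$ is injective and in a right-angled Artin group the powers $v^a,w^b$ of distinct vertices ($a,b\neq 0$) commute if and only if $\{v,w\}\in E(\gam)$ (seen via the retraction onto the $\{v,w\}$-subgroup), we get $[t_v,t_w]=1\iff\{v,w\}\in E(\gam)$. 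If $t_v$ is pseudo-Anosov on all of $S$ its centralizer in $\Mod(S)$ is virtually cyclic, which forces $v$ to be isolated in $\gam$; such vertices impose no adjacency constraints, so I set them aside and reintroduce them at the end with fresh, mutually distinct curves.

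Next I would recall the structure of commuting pure mapping classes (Birman--Lubotzky--McCarthy, Ivanov; see also \cite{kobraag}): a pure $g$ has a canonical reduction system $\sigma(g)$ --- a multicurve --- and is the identity or pseudo-Anosov on each component of $S\setminus\sigma(g)$, and two pure $g,h$ commute if and only if $\sigma(g)\cup\sigma(h)$ is a multicurve, the pseudo-Anosov components of $g$ and of $h$ are pairwise equal or disjoint, and the restrictions to a common such component are commensurable. The naive rule $v\mapsto\sigma(t_v)$ records only the first clause and is not induced (e.g.\ $t_v=T_c$ and $t_w$ pseudo-Anosov on a subsurface with $c$ in its interior give disjoint reduction systems but non-commuting classes). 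To repair this I would enlarge each $\sigma(t_v)$ by one curve per pseudo-Anosov piece. Only finitely many curves occur in $\bigcup_u\sigma(t_u)$; since $\mathcal C(R)$ has infinite diameter for every essential subsurface $R$, for each $R$ that is a pseudo-Anosov component of some $t_u$ and each commensurability class $\mathfrak c$ of pure pseudo-Anosov maps of $R$ I would fix a curve $\gamma_{R,\mathfrak c}\subset\operatorname{int}(R)$ such that (a) curves attached to distinct classes of the same $R$ intersect, and (b) each $\gamma_{R,\mathfrak c}$ intersects every curve of $\bigcup_u\sigma(t_u)$ that is essential in $R$ and is isotopic to none of them; such pairwise--crossing families exist by iterating a pseudo-Anosov of $R$ on a curve far in $\mathcal C(R)$ from the prescribed finite set. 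Then $M_v:=\sigma(t_v)\cup\{\gamma_{R,\mathfrak c(t_v|_R)}:R\text{ a pseudo-Anosov component of }t_v\}$, a multicurve.

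I would then check that $v\mapsto M_v$ is an induced embedding. If $\{v,w\}\in E(\gam)$, the structure theorem makes $\sigma(t_v)\cup\sigma(t_w)$ a multicurve, the pseudo-Anosov components disjoint or equal, and common ones of the same class, so, matching the $\gamma$'s on common components by (a), $M_v\cup M_w$ is a multicurve. Conversely, if $M_v\cup M_w$ is a multicurve then $\sigma(t_v)\cup\sigma(t_w)$ is one; clause (b) forces $\sigma(t_w)$ to avoid the interiors of the pseudo-Anosov components of $t_v$ (and symmetrically), which forces those components to be disjoint from or equal to those of $t_w$; clause (a) forces commensurability on common components; so by the structure theorem $[t_v,t_w]=1$, i.e.\ $\{v,w\}\in E(\gam)$. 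Finally, injectivity: if $M_v=M_w$ then, (b) having also been used to make the $\gamma$'s disjoint from $\bigcup_u\sigma(t_u)$, $t_v$ and $t_w$ have the same reduction system, the same pseudo-Anosov components, and the same classes there, hence lie in a common free abelian subgroup of $\Mod(S)$; the set $C$ of vertices with a fixed such ``reduction datum'' is a clique of $\gam$ of size at most the maximal rank of an abelian subgroup of $\Mod(S)$, and (since commuting with $t_v$ depends only on this datum) every vertex adjacent to one member of $C$ is adjacent to all of them, so I can separate $C$ by replacing the common value $M_0$ of $M$ on $C$ with a chain $M_0\subsetneq M_0\cup\{d_1\}\subsetneq\cdots$, the curves $d_i$ taken in a complementary region of $\sigma(t_v)$ left unused by $M_0$ and by the finitely many neighbours of $C$.

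The main obstacle is exactly this last coordination. Since a multicurve can never fill a subsurface, no single multicurve ``sees'' everything happening inside a complementary component, so one cannot detect the pseudo-Anosov pieces, or separate the fibres of $v\mapsto M_v$, by any wholesale device; what rescues the argument is the finiteness of $\bigcup_u\sigma(t_u)$, which reduces everything to arranging finitely many prescribed intersections and leaves some essential subsurface with enough spare complexity to host the auxiliary and tie--breaking curves. Verifying that this spare complexity is genuinely available --- that the finitely many relevant multicurves never jointly fill $S$ in a way that blocks the construction --- is where the real work lies; for the intended application to Theorem \ref{t:embed} one could in fact stop after the ``$\{v,w\}\in E(\gam)\Rightarrow M_v\cup M_w$ a multicurve'' direction plus injectivity, since a subgraph embedding already forces $\chi(\gam)\leq\chi(\mathcal C(S)_k)$.
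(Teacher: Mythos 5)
Your route is genuinely different from the paper's: you try to build the map to $\mathcal{C}(S)_k$ directly and geometrically, sending each vertex $v$ to its canonical reduction system $\sigma(t_v)$ augmented by auxiliary curves inside the pseudo-Anosov pieces, whereas the paper first uses the main theorem of \cite{kobrada} --- more precisely, of \cite{kobraag} --- to identify the group generated by the twist/pseudo-Anosov pieces with $A(X)$, $X$ the commutation graph of those pieces, realizes $X$ as an induced subgraph of $\mathcal{C}(S)$ by pushing curves around with high powers of the pseudo-Anosov pieces, and then works purely algebraically: $v\mapsto\supp(\psi(v))$ for an embedding $\psi\colon A(\gam)\to A(X)$ chosen to minimize total support, with coincidences of supports killed by precomposing $\psi$ with transvections and power maps of $A(\gam)$. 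Your two adjacency directions can be pushed through (with some care about nested pseudo-Anosov supports and about the precise commutation criterion for pure classes), but the final injectivity repair is a genuine gap, not a deferred verification. The curves $d_i$ must be essential, non-peripheral, not isotopic to curves of $M_0$, and disjoint from $M_0$ and from $M_u$ for \emph{every} neighbour $u$ of the coincidence class $C$; since distinct neighbours of $C$ need not be adjacent to one another, the union of these constraints can exhaust the surface. Concretely: on a closed genus two surface take disjoint nonseparating curves $c_1,c_2$ whose complement is a single four-holed sphere $R$ (complexity one), and an embedding (which exists by \cite{kobraag}) sending adjacent vertices $v,w$ to $T_{c_1}^{K}T_{c_2}^{K}$ and $T_{c_1}^{K}T_{c_2}^{2K}$, and vertices $u_1,u_2$, adjacent to $v$ and $w$ but not to each other, to powers of non-commensurable pseudo-Anosovs supported on $R$. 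Then $M_v=M_w=\{c_1,c_2\}$, so $C=\{v,w\}$, while your construction is forced to place two distinct auxiliary curves $\gamma_{R,\mathfrak{c}_1}\in M_{u_1}$ and $\gamma_{R,\mathfrak{c}_2}\in M_{u_2}$ inside $R$, and in a complexity-one surface these cross each other and every essential curve of $R$ distinct from them crosses both. Every essential nonperipheral curve of $S$ disjoint from $c_1\cup c_2$ is isotopic into $R$ or to one of the $c_i$, so no admissible $d_1$ exists and $v,w$ cannot be separated.

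Note also that your closing remark does not rescue the application: even a mere graph homomorphism $\gam\to\mathcal{C}(S)_k$ (which is all the chromatic-number argument needs) must send the \emph{adjacent} pair $v,w$ to distinct vertices, and that is exactly the coincidence your construction fails to break. The paper's algebraic step is designed precisely for this situation: rather than looking for spare curves in the surface, it changes the embedding, using a transvection $v\mapsto vv'^{-1}$ and a power map in $A(\gam)$ to shrink $\supp(\psi(v))$ strictly inside $\supp(\psi(w))$, and then uses the fact that two distinct nested cliques are still adjacent in the clique graph. In the example above this replaces the image of $v$ by a power of a single twist, so the two vertices acquire distinct (nested, adjacent) cliques with no new curves needed. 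To complete your argument you need some device of this kind --- modifying the homomorphism, or otherwise re-encoding same-datum vertices by nested sub-multicurves of $M_0$ when $M_0$ has more than one component and by exponent data when it does not --- rather than hoping for unused complexity in the surface, which the example shows need not exist.
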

\begin{proof}
Choose an embedding $\phi:A(\gam)\to\Mod(S)$.  
By replacing $\phi(v)$ by some positive power, we may assume that for each $v$ the mapping class $\phi(v)$ is \emph{pure}; that is, each $\phi(v)$ can be written as a product of commuting mapping classes which are either powers of Dehn twists about simple closed curves or pseudo-Anosov mapping classes on connected, incompressible subsurfaces of $S$.  
Decompose $\phi(v)$ in such a way, writing 
\[\phi(v)=f_1^v\cdots f_{n(v)}^v.\]  
There exists a collection of mapping classes $C$ such that $f_i^v$ is a power of some element in $C$ for every vertex $v$ of $\gam$ and $1\le i\le n(v)$.
By choosing $C$ with the smallest cardinality, we may assume that no two elements of $C$ generate a cyclic subgroup of $\Mod(S)$.
Let $X$ be the \emph{commutation graph} of $C$; that is, the vertices of $X$ are the elements of $C$ and two vertices are adjacent if the associated mapping classes commute.
The main result of~\cite{kobraag} implies that $\form{C}\le\Mod(S)$ is isomorphic to $A(X)$, again possibly after replacing each $\phi(v)$ by a further higher power and accordingly, raising each element of $C$ to a suitable power.
We will argue that $X$ embeds in $\mathcal{C}(S)$, and then that $\gam$ embeds in $X_k$, thus establishing the lemma.

Write $C$ as a union of powers of Dehn twists  $\{g_1,\ldots,g_p\}$ about simple closed curves $\{\alpha_1,\ldots,\alpha_p\}$ and of pseudo-Anosov mapping classes $\{g_{p+1},\ldots,g_{p+q}\}$ supported on subsurfaces $\{S_{p+1},\ldots,S_{p+q}\}$.  
We let $S_i$ be a sufficiently small regular neighbourhood of $\alpha_i$ for $i\le p$. 
The mapping classes $g_i$ and $g_j$ correspond to adjacent vertices of $X$ if and only if $S_i$ and $S_j$ are disjoint.  
For each $i=p+1, p+2,\ldots,p+q$, we inductively choose an essential, nonperipheral simple closed curve $\beta_i$ in the interior of $S_i$ and apply a sufficiently large power of $g_i$ to $\beta_i$ to get $\alpha_i$ so that the following holds: 
\begin{enumerate}[(i)]
\item $\alpha_i$ and $\alpha_j$ are disjoint if and only if so are $S_i$ and $\alpha_j$, for $j=1,2,\ldots,i-1$;
\item $\alpha_i$ and $S_j$ are disjoint if and only if so are $S_i$ and $S_j$, for $j=i+1,i+2,\ldots,p+q$.
\end{enumerate}
After this inductive process, we have that $\alpha_i$ and $\alpha_j$ are disjoint if and only if $S_i$ and $S_j$ are disjoint for  $1\le i, j\le p+q$. Furthermore, we can require the curves $\alpha_i$ and $\alpha_j$ to be non-isotopic for $i\ne j$; note that $S_i$ and $S_j$ may be isotopic.
Hence $X$ coincides with the subgraph of  $\mathcal{C}(S)$ induced by $Y=\{\alpha_1, \ldots,\alpha_{p+q}\}$.

Let $\psi$ be an embedding from $A(\gam)$ into $A(X)$ such that $\supp(\psi(v))$ is a clique for each vertex $v$ of $\gam$ and moreover, $\sum_{v\in V(\gam)} |\supp(\psi(v))|$ is minimal. Such a $\psi$ exists, since the composition of $\phi:A(\gam)\to \form{C}$ with the isomorphism $\form{C}\cong A(X)$ is an embedding that sends  each vertex of $\gam$ to a word whose support is a clique. We claim that $\supp(\psi(v))\ne\supp(\psi(v'))$ for distinct vertices $v$ and $v'$ of $\gam$.
Suppose not, and write $\psi(v) = x_1^{p_1}\cdots x_k^{p_k}$ and $\psi(v') = x_1^{q_1}\cdots x_k^{q_k}$ such that 
$k>0$, \[\prod_i p_i\ne0\ne \prod_i q_i\] and $\{x_1,\ldots,x_k\}$ span a clique in $X$.
Note that for each vertex $u$ of $\gam$ adjacent to $v$, we have $[\psi(v),\psi(u)]=1$ and so, \[\supp(\psi(v'))\cup\supp(\psi(u)) =\supp(\psi(v))\cup\supp(\psi(u))\] spans a clique; this implies that $[\psi(v'),\psi(u)]=1$ and $v'$ is adjacent to $u$. Hence we have an automorphism\footnote{An automorphism of this kind is called a \emph{transvection}~\cite{Servatius1989}.} $\xi:A(\gam)\to A(\gam)$ defined by $\xi(v) = vv'^{-1}$ and $\xi(x) = x$ for each $x\in V(\gam)\setminus \{v\}$. Let $\eta:A(\gam)\to A(\gam)$ be an embedding that maps $v$ to $v^{q_1}$ while fixing all the other vertices. Then \[\psi' =  \psi \circ \eta \circ \xi^{p_1}:A(\gam)\to A(X)\] is an embedding such that \[\supp(\psi'(v))\subseteq\{x_2,\ldots,x_k\}\subsetneq \supp(\psi(v))\] 
and that $\psi'(y) = \psi(y)$ for $y\subseteq V(\gam)\setminus\{v\}$. This contradicts the minimality of supports assumption.
 
We now define a map $\delta:V(\gam)\to V(X_k)$ by $\delta(v) = \supp(\psi(v))$. This map is an embedding by the preceding paragraph. Moreover, $\delta$ is easily seen to extend to a graph embedding $\gam\to X_k$ such that the $\delta(\gam)$ is an induced subgraph of $X_k$.
\end{proof}

We remark that in the proof above, sufficiently high powers of Dehn twists about elements of $Y$ generate a subgroup of $\Mod(S)$ which is isomorphic to $A(X)$, by the principal result of \cite{kobraag}.  The last ingredient for the proof of Theorem \ref{t:embed} is the following result of Erd\"os:

\begin{thm}
Let $N$ and $M$ be positive integers.  There exists a graph $\gam_{N,M}$ of girth at least $M$ and chromatic number at least $N$.
\end{thm}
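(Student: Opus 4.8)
The plan is to prove this by the probabilistic method, in the classical way due to Erd\"os. Fix a real number $\theta$ with $0<\theta<1/M$, and for large $n$ consider the Erd\"os--R\'enyi random graph $G=G(n,p)$ on the vertex set $\{1,2,\ldots,n\}$, where each of the $\binom{n}{2}$ possible edges is present independently with probability $p=n^{\theta-1}$. I will show that for all sufficiently large $n$ there is, with positive probability, an induced subgraph of $G$ on more than $n/2$ vertices with girth at least $M$ and chromatic number at least $N$; taking $\gam_{N,M}$ to be such a subgraph then proves the theorem (and feeds into the proof of Theorem~\ref{t:embed}: combine it with Theorem~\ref{p:chromatic}, Lemma~\ref{l:cliquecolor} and Lemma~\ref{l:embed}, applying the present result with $N$ one larger than the chromatic number of $\mathcal{C}(S)_k$).

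First I would control short cycles. For each $i$ with $3\le i\le M-1$, the expected number of cycles of length $i$ in $G$ is $\frac{1}{2i}\cdot\frac{n!}{(n-i)!}\,p^i\le (np)^i=n^{i\theta}$. Since $i\theta\le(M-1)\theta<1$, summing over these finitely many values of $i$ shows that the expected number $Z$ of cycles of length less than $M$ is $o(n)$; by Markov's inequality $\Pr[Z\ge n/2]\to 0$ as $n\to\infty$.

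Next I would bound the independence number. Put $a=\lceil (4/p)\ln n\rceil$, so $a=O(n^{1-\theta}\ln n)$. A fixed set of $a$ vertices spans no edge with probability $(1-p)^{\binom{a}{2}}\le e^{-p\binom{a}{2}}$, so the probability that $G$ contains an independent set of size $a$ is at most $\binom{n}{a}e^{-p\binom{a}{2}}\le\bigl(n\,e^{-p(a-1)/2}\bigr)^a\le n^{-a/2}$ for $n$ large, which tends to $0$. Hence with probability tending to $1$ we have $\alpha(G)<a$.

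Therefore, for $n$ large we may fix an outcome $G$ with $Z<n/2$ and $\alpha(G)<a$. Delete one vertex from each cycle of length less than $M$; the resulting induced subgraph $G'$ has more than $n-Z>n/2$ vertices, has girth at least $M$, and satisfies $\alpha(G')\le\alpha(G)<a$. Since a proper coloring partitions $V(G')$ into independent sets,
\[
\chi(G')\ \ge\ \frac{|V(G')|}{\alpha(G')}\ >\ \frac{n/2}{a}\ \ge\ \frac{c\,n^{\theta}}{\ln n}
\]
for some constant $c>0$ and all large $n$, and the right-hand side tends to infinity. Thus for $n=n(N,M)$ sufficiently large we obtain $\chi(G')\ge N$, and we set $\gam_{N,M}=G'$. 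The genuinely delicate point is the calibration of the exponent: $\theta$ must be small enough ($\theta<1/M$) that the expected number of short cycles is $o(n)$, while $p=n^{\theta-1}$ must decay slowly enough that $\alpha(G)$ stays of order about $n^{1-\theta}$, forcing $\chi$ up even after the harmless deletion of the $o(n)$ vertices used to destroy the short cycles.
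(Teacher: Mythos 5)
Your proof is correct: the paper offers no argument of its own here, simply citing Diestel \cite{diestel}, and what you have written out is exactly the classical Erd\H{o}s probabilistic deletion argument found there (count short cycles and apply Markov, bound the independence number, delete one vertex per short cycle, then use $\chi(G')\ge |V(G')|/\alpha(G')$). So you take essentially the same route, just in full detail, and your calibration $\theta<1/M$ with $p=n^{\theta-1}$ makes all the estimates go through.
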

\begin{proof}
See \cite{diestel}.
\end{proof}

We can now prove our main result:

\begin{proof}[Proof of Theorem \ref{t:embed}]
Suppose that $A(\gam)<\Mod(S)$.   By Lemma \ref{l:embed}, we have that $\gam$ embeds as a subgraph of $\mathcal{C}(S)_k$.  Combining Theorem \ref{p:chromatic} and Lemma \ref{l:cliquecolor}, we see that $\mathcal{C}(S)_k$ has a finite chromatic number $N(S)$, so that the chromatic number of $\gam$ cannot exceed $N(S)$.  By Erd\"os' Theorem, for every $M$ we can produce a graph of girth at least $M$ whose chromatic number exceeds $N(S)$.  The corresponding right-angled Artin group cannot embed in $\Mod(S)$.
\end{proof}

\end{document}